\def\@setthanks{\vspace{-\baselineskip}\def\thanks##1{\@par##1\@addpunct.}\thankses}
\newtheorem{theorem}{Theorem}
\newtheorem{lemma}[theorem]{Lemma}
\newtheorem{definition}[theorem]{Definition}
\newtheorem{corollary}[theorem]{Corollary}
\newtheorem{proposition}[theorem]{Proposition}
\title[From MFG To NSE]
{From Mean Field Games To Navier-Stokes Equations}
\author{ Tao Luo}
\author{Qingshuo Song}
\thanks{
\noindent 
T. Luo is with the Department of Mathematics, City University of Hong Kong, \url{taoluo@cityu.edu.hk}\\
Q. Song is with the Department of Mathematical Sciences, Worcester Polytechnic Institute, 
\url{qsong@wpi.edu}.
}
\begin{document}
\maketitle
\begin{abstract}
This work establishes the equivalence between Mean Field Game and a class of  PDE systems closely related to compressible Navier-Stokes equations.
The solvability of  
the PDE system via the existence of the Nash Equilibrium of the Mean Field Game
is provided under a set of conditions.
\end{abstract}

\section{Introduction}

Global-in-time existence  of strong solutions to the incompressible Navier-Stokes equations in 3-dimensions is a well known open question  to decades as one of seven Clay Millennium prizes (\cite{fefferman}):
\begin{equation}
\label{eq:nse1}
\begin{array}
{ll}
\partial_t v =   \nu \Delta v -( v \cdot \nabla) v - \nabla p + f, \quad t>0, \ x\in \mathbb M^3, \\
\nabla \cdot v = 0, \\
v(0, x) = v_0(x).
\end{array}
\end{equation}
In the above, the constant $\nu>0$ is the viscosity coefficient,   the unknowns are the velocity $v$ and the pressure $p$, 
and the state domain $\mathbb M^3$ is either $\mathbb R^3$,  $\mathbb T^3$ or a bounded domain $\mathbb{D}$ in $\mathbb{R}^3$ with a boundary (in this case, the non-slip boundary condition $v=0$ on $\partial \mathbb {D}$ is prescribed). 
The equations describe
the motion of an incompressible viscous fluid filling $\mathbb M^3$. The vector $v$ is the velocity of
the fluid element and the scalar quantity $p$ measures the pressure exerted on the fluid element. For the above problem in 3-spatial dimensions, the local-in-time existence of strong solutions and global-in-time existence of weak solutions is classical. However, the global-in-time existence of strong solutions (or the global-in-time regularity of solutions) remains an outstanding open problem in 3-spatial dimensions (\cite{fefferman}). This is in sharp contrast to the 2-spatial dimensional case. The above classical results are due to J. Leray (\cite{leray1, leray2}), E. Hopf (\cite{hopf}), and O. A. Ladyzhenskaya (\cite{ladyz1,ladyz2}), J. L. Lions-G. Prodi (\cite{LP}), J. Serrin (\cite{serrin}). See also the details and some recent developments in monographs of Constantin-Foias (\cite{constantine}), 
Foias-Manley-Rosa-Temam (\cite{Foias}), P. L. Lions (\cite{Lions 1}), Gilles-Rieusset (\cite{GR18}) and Temam(\cite{temam}). In 3-spatial dimensions, 
the celebrated partial regularity result of Caffarelli-Kohn-Nirenberg  (\cite{CKN}) states that the one-dimensional Hausdorff measure 
of the singular set of the suitable weak solution is zero (see also F. Lin (\cite{lin}) for a new proof), which strengthened the previous result of Scheffer's result (\cite{scheffer1, scheffer2, scheffer3, scheffer4}), see also Foias-Temam (\cite{FT}) for the interesting work in this direction. 

The above mentioned results are for incompressible  Navier-Stokes equations. In a more general setting, one has the compressible Navier-stokes equations modelling the motion of compressible viscous fluids, which reads, for the isentropic fluids (\cite{Lions 2, GR18}
\begin{equation}
\label{eq:nse2}
\begin{array}
{ll}
(\partial_t + v\cdot \nabla) v = - \rho^{-1} \nabla p + \rho^{-1}  \nu \Delta v  + \rho^{-1}  f \\
\partial_t \rho + \nabla (\rho v) = 0,\\
v(0, x) = v_0(x), \ \rho(0, x) = \rho_0.
\end{array}
\end{equation}

In this case, the pressure $p$ is a given function of the density $\rho$,  while the unknowns are the density $\rho$ and the velocity field $v$
with given initial conditions at time $0$. Two equations are termed the momentum equation and the mass conservation, respectively. Here, we have simplified the viscosity term for the simplicity of presentations. 

Indeed, the hydrodynamic equation \eqref{eq:nse2} is equivalent to NSE \eqref{eq:nse1} if the density $\rho$ is a nonzero 
constant (the underlying fluid dynamic is incompressible in this case). In the literature, the  hydrodynamic equation \eqref{eq:nse2}
is termed compressible NSE. Formally, when the motion is slow, i.e., when the Mach number (the ratio of fluid velocity and sound speed) is very small, the incompressible model \eqref{eq:nse1} can be used to approximate the compressible model \eqref{eq:nse2}. 

Compared with the incompressible NSE \eqref{eq:nse1}, much less is known about the compressible NSE \eqref{eq:nse2}.
There has been a very satisfied well-posedness theory for the compressible Navier-stokes equations in 1-spatial dimensions, for instance, 
one may refer to \cite{ka, ks}, by using the order structure of the real line and some special transport properties along the particle path. However, the situation is quite different and difficult in higher spatial dimensions of 2 or 3.  For the general initial data which do not have to be small perturbations of constant states, the global-in-time existence of {\it weak} solutions for the compressible Navier-Stokes equations in 3-spatial dimensions 
was proved by P. L. Lions (\cite{Lions 2}), for suitable conditions, which was a breakthrough in the study of fluid PDEs. 
Meanwhile, many problems of fundamental importance remain open, for instance, the regularity and uniqueness of the weak solutions constructed in \cite{Lions 2}.

In this paper, we propose a variation of the compressible NSE \eqref{eq:nse2}. 
\begin{itemize}
\item
By Newton's second law, the right hand side of momentum equation of the compressible NSE \eqref{eq:nse2} is given as the ratio between the total force $ -  \nabla p +   \nu \Delta v  +   f$ and the density $\rho$. In our variation,
the ratio on the right-hand side is generalized to a given functional of the total force term depending on the density $\rho$.
\item 
The mass conservation of \eqref{eq:nse2} describes the density flow $\rho$ at the velocity $v$.
We replace this equation by a Fokker-Planck equation (FPK) with the diffusive coefficient 
tied up with the viscosity term $\nu[\rho]$, which describes the white noise perturbed particle density.
Moreover, we impose the terminal condition on the density.
\end{itemize}
The resulting system becomes
\begin{equation}
\label{eq:nse3}
\begin{array}
{ll}
(\partial_t + v\cdot \nabla) v = -  \nabla p [\rho] +  \nu [\rho] \Delta v  +  f [\rho],  \ (t, x)\in [0, T] \times \mathbb M^3\\
\partial_t \rho + \nabla \cdot (\rho v) + \nu [\rho]   \Delta \rho = 0, \\
v(0, x) = v_0(x), \ \rho(T, x) = \rho_T(x).
\end{array}
\end{equation}

In this paper, we consider a class of PDE systems fitting into the framework formulated by \eqref{eq:nse3}.
Our main contribution to this work can be summarized as follows:
Although the proposed PDE system \eqref{eq:nse3} does not strictly follow the original physical interpretations, we provide an alternative interpretation from Mean Field Game (MFG) theory with some conditions. 
To the best of our knowledge, the connection between the Navier-Stokes equation and the Mean Field Game has not been established in the literature.  Offering possible insights from the point of view of the Mean Field Game to the study of compressible and incompressible Navier-Stokes equations is one of the motivations of the present paper. 
For more details on MFG, we refer \cite[Lasry and Lions]{LL07a} and \cite[Huang, Caines, and Malhame]{HCM06},  
\cite[Carmona and Delarue]{CD18I}.

Meanwhile, as far as a torus domain is concerned, most MFG analysis remains on an analytical approach in the literature. 
Our proof of the regularity of the proposed NSE very much relies on the probabilistic analysis of HJB equations on a torus.
Indeed, our approach opens up the possibility to use another probabilistic approach, 
such as the approach based on Forward-Backward Stochastic Differential Equation (FBSDE)
to analyze MFG and further different kinds of NSE on a torus. We refer \cite[Ma and Yong]{MY99b} for excellent exposition on FBSDEs and their relation to stochastic control problems.

Our proof below relies on Schauder's fixed point theorem, which is closely related to \cite{CHHJS20}, while the sufficient condition in this paper on the existence is weaker by relaxing Lipschitz continuity of $(p, h)$ as of uniformly continuous function in the form of \eqref{eq:a2}, see also in \cite{Car13} for its comparison.

\section{The main result}
In this section, we describe the precise problem setting and the main result. 
An example is also provided for the illustration purpose.
\subsection{Notations} \label{s:notations}
To proceed, we will introduce the following notions. 
We denote
the  $3$-torus state space by $\mathbb T^3 = \mathbb R^3/\mathbb Z^3$.
For $x\in \mathbb R^3$, let $\pi(x)$ be the coset of $\mathbb Z^3$ that contains $x$, i.e. 
$$\pi(x) = x + \mathbb Z^3.$$
A canonical metric on $\mathbb T^3$ can be induced from the Euclidiean metric by
$$|\pi(x) - \pi(y)|_{\mathbb T^3} = \inf\{|x - y - z|: z\in \mathbb Z^3\}.$$
For a function $f: [0, T]\times \mathbb T^3 \mapsto S$ with its range in a Banach space $S$, we define,  for $\delta_1, \delta_2\in [0,1]$ 
$$|f|_0 = \sup_{(t, x) \in  [0, T]\times \mathbb T^3} |f(t,x)|, \
[f]_{\delta_1, \delta_2} = \sup_{0\le t\neq t' \le T, x\neq x' \in\mathbb T^3}
\frac{|f(t,x) - f(t', x')|}{|t-t'|^{\delta_1} + |x- x'|^{\delta_2}}.
$$
We denote $f\in C^{m+\delta_1, n+\delta_2}( [0, T]\times \mathbb T^3, S)$ for nonnegative integers $m$ and $n$, if it satisfies
$$|f|_{m+\delta_1, n+\delta_2} = |f|_{m, n} + [\partial_t^m f]_{\delta_1, \delta_2} +
\sum_{|\alpha| = n} [D_x^\alpha f]_{\delta_1, \delta_2} <\infty,$$
where
$$
|f|_{m, n} = \sum_{i \le m} |\partial_t^i f|_0  +
\sum_{|\beta|\le n} 
|D_x^\beta f|_0.$$
In this paper, we use $S$ for Euclidean spaces $\mathbb R$ or $ \mathbb R^3$. Moreover, the   $C^{m+\delta_1, n+\delta_2}( [0, T]\times \mathbb T^3)$ will be used for a short notation of
$C^{m+\delta_1, n+\delta_2}( [0, T]\times \mathbb T^3, \mathbb R)$.
We also denote the closed ball of radius $r$ with center zero in the space  $C^{m+ \delta_1, n+\delta_2}$ by 
$B^{m+ \delta_1, n+\delta_2}(r)$, i.e.
$$B^{m+ \delta_1, n+\delta_2}(r) = \{f \in C^{m+ \delta_1, n+\delta_2}: |f|_{m+ \delta_1, n+\delta_2} \le r\}.$$
Note that,  for any $m'\le m$ and $n'\le n$, the set $B^{m+ \delta_1, n+\delta_2}(r)$ is a compact set in the space 
$C^{m', n'}([0, T]\times \mathbb T^3, S)$ by the embeding theory. 

Let's denote the space of all probability distributions on $\mathbb T^3$ by $\mathcal P = \mathcal P(\mathbb T^3)$.
For a random variable $X: \Omega \mapsto \mathbb T^3$
on a probability space $(\Omega, \mathcal F, \mathbb P)$, 
we denote by $\mathcal L(X) = \mathbb P X^{-1}$ for its 
pushforward measure on $\mathbb T^3$. 
In particular, any random variable $X$ has a finite $p$th moment $\mathbb E[ |X|^p ] \le 1.$
Hence, any probability distribution has a finite $m$-th moment  for any $m\ge 1$, and for the simplicity,
we use  the 1-Wasserstein metric $d_1$ to the space $\mathcal P$:
$$d_1(\mu, \nu) = \inf_{\pi \in \Pi(\mu, \nu)} 
\left( \int_{\mathbb T^3 \times \mathbb T^3} |x - y| d\pi(x, y) \right),$$
where $\Pi(\mu, \nu)$ is the collection of joint distribution on the product space $\mathbb T^3 \times \mathbb T^3$ with $\mu$ and $\nu$ as marginal distributions.

We say a probability measure flow $\rho\in C^0([0, T], \mathcal P)$ if $\rho$ is a continuous mapping of 
$[0, T] \mapsto \mathcal P$ with respect to the metric $d_{1}(\cdot, \cdot)$. Then, $C^0([0, T], \mathcal P)$ is a complete 
metric space defined by
\begin{equation}
\label{eq:rho01}
d_{1T} (\rho_1, \rho_2) = \sup_{0\le t\le T} d_1(\rho_1(t), \rho_2(t)).
\end{equation}
We set  
$$|\rho|_{1T} = d_{1T}  (\rho, \bar \delta_0),$$ 
where 
$\bar \delta_0 (t) \equiv \delta_0$ for all $t$ with dirac distribution $\delta_0$.

We also define a subset of $C^0([0, T], \mathcal P)$: 
We say $\rho \in C^\delta([0, T], \mathcal P)$ for some $\delta \in (0,1)$  if $\rho$ satisfies additional condition
$$|\rho|_{1T, \delta} := |\rho|_{1T} + [\rho]_{1T, \delta}<\infty,$$
where
$$
[\rho]_{1T, \delta} := \sup_{0\le t<t'\le T} \frac{d_1(\rho(t), \rho(t'))}{|t-t'|^\delta}.
$$
Finally, we also denote by $\Lambda$ the collection of all functions $\lambda: [0, \infty) \mapsto \mathbb R^+$ satisfying
$$\lambda'(x) >0,  \forall x>0; \ \lim_{x\to \infty} \lambda(x) = \infty.$$
By $\Lambda_0$, we denote a subset of $\Lambda$ given by
$$\Lambda_0 = \{\lambda \in \Lambda: \lambda(0) = 0.\}$$

\subsection{The problem setting and the main result}

To focus on the main issue on the connection between NSE and MFG, 
we impose the following conditions to the system \eqref{eq:nse3} throughout this paper:
\begin{itemize}
\item[(H)] The viscosity coefficient is $\nu[\rho] = 1/2$ and the scaled external force functional is $f[\rho] = 0$. Furthermore, 
$v_0[\rho] =  \nabla h[\rho]$ is given as a conservative vector field for some smooth function $h$, and 
the $\rho_T$ is given as a smooth probability density $\mu$. 
Finally, the state space $\mathbb M^3$ will be taken for Torus $\mathbb T^3$ mainly for the simplicity of the proof at the infinity. 

\end{itemize}

Therefore, our goal in this paper is to consider the solvability of $(\rho, v)$ from the following equation:
\begin{equation}
\label{eq:nse4}
\begin{array}
{ll}
(\partial_t + v\cdot \nabla) v = -  \nabla p [\rho] +  \frac 1 2  \Delta v  ,  \ (t, x)\in [0, T] \times \mathbb T^3\\
\partial_t \rho + \nabla \cdot (\rho v) + \frac 1 2    \Delta \rho =0, \\
v(0, x) =  \nabla h[\rho](x), \ \rho(T, x) = \mu(x).
\end{array}
\end{equation}

Our main result is the solvability given below.
\begin{theorem}
\label{t:main}
Suppose $p$ and $h$ satisfies, 
\begin{equation}
\label{eq:a1}
|p[\rho]|_{0, 2} + |h[\rho]|_4 \le \kappa, \ \forall \rho\in C^{1/2} ([0, T], \mathcal P), 
\end{equation}
\begin{equation}
\label{eq:a3}
|p[\rho]|_{1/2, 2} < \infty, \forall \rho\in  C^{1/2} ([0, T], \mathcal P), 
\end{equation}
and
\begin{equation}
\label{eq:a2}
|p[\rho_1] - p[\rho_2]|_{0, 1} + |h[\rho_1] - h[\rho_2]|_4 \le \lambda_0 (d_{1T} (\rho_1, \rho_2)),  
\ \forall \rho_1, \rho_2 \in C^{1/2} ([0, T], \mathcal P)
\end{equation}
for some $\kappa>0$ and $\lambda_0 \in \Lambda_0$.
Then, 
there exists a classical 
solution 
for
the system \eqref{eq:nse4}.
\end{theorem}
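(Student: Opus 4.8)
The plan is to recast \eqref{eq:nse4} as a fixed-point problem for the density flow and to invoke Schauder's theorem, exploiting the gradient structure to linearize the velocity equation. Fix a candidate flow $\rho\in C^{1/2}([0,T],\mathcal P)$. Because the initial velocity $v_0[\rho]=\nabla h[\rho]$ is conservative and the momentum equation propagates gradient fields, I would look for $v=\nabla\phi$; inserting this into the first line of \eqref{eq:nse4} and integrating in space collapses it (modulo an irrelevant function of $t$) to the viscous Hamilton--Jacobi equation
\[
\partial_t\phi=\tfrac12\Delta\phi-\tfrac12|\nabla\phi|^2-p[\rho],\qquad \phi(0,\cdot)=h[\rho].
\]
The Cole--Hopf change of variable $w=e^{-\phi}$ then yields the \emph{linear} parabolic problem $\partial_t w=\tfrac12\Delta w+p[\rho]\,w$ with $w(0,\cdot)=e^{-h[\rho]}$. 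On the torus this is well posed, and the bounds \eqref{eq:a1} together with the time-regularity \eqref{eq:a3} feed parabolic Schauder theory to produce a unique classical $w$ with estimates depending only on $\kappa$ and $T$; the minimum principle (or the Feynman--Kac formula) gives a uniform lower bound $w\ge c(\kappa,T)>0$, so that $\phi=-\log w$ and the velocity $v[\rho]=-\nabla w/w$ are well defined, bounded, and spatially Lipschitz, uniformly in $\rho$. The indices in \eqref{eq:a1}--\eqref{eq:a3} (four spatial derivatives on $h$, two on $p$) are exactly what is needed to push $w$, hence $v$, into $C^{1,2}$.

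Given $v[\rho]$ I would solve the terminal-value Fokker--Planck equation $\partial_t\rho'+\nabla\cdot(\rho' v[\rho])+\tfrac12\Delta\rho'=0$, $\rho'(T,\cdot)=\mu$, probabilistically. Reversing time by $s=T-t$ turns it into a forward Fokker--Planck equation with drift $-v[\rho](T-s,\cdot)$, whose solution is the law of the diffusion
\[
d\bar X_s=-v[\rho](T-s,\bar X_s)\,ds+dW_s,\qquad \bar X_0\sim\mu,
\]
namely $\rho'(T-s,\cdot)=\mathcal L(\bar X_s)$. Since $v[\rho]$ is bounded and Lipschitz with constants controlled by $\kappa$, the standard moment estimate $d_1(\mathcal L(\bar X_s),\mathcal L(\bar X_{s'}))\le \mathbb E|\bar X_s-\bar X_{s'}|\le C(\kappa,T)|s-s'|^{1/2}$ shows that $\rho'$ lies in a fixed ball $K\subset C^{1/2}([0,T],\mathcal P)$, independent of $\rho$. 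This defines the map $\Phi(\rho)=\rho'$ with $\Phi(C^{1/2})\subset K$. The set $K$ is convex (by convexity of $d_1$ in its arguments) and, by the Arzel\`a--Ascoli theorem in the Wasserstein metric together with compactness of $\mathcal P(\mathbb T^3)$, is compact in $C^0([0,T],\mathcal P)$; thus $\Phi:K\to K$, and after embedding the measure flows into the signed-measure vector space the hypotheses of Schauder's fixed-point theorem are in place.

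The final and most delicate step is the continuity of $\Phi$ on $K$ in the metric $d_{1T}$, and this is exactly where the relaxed assumption \eqref{eq:a2} enters. If $d_{1T}(\rho_1,\rho_2)\to0$, then \eqref{eq:a2} forces $|p[\rho_1]-p[\rho_2]|_{0,1}+|h[\rho_1]-h[\rho_2]|_4\to0$ through the modulus $\lambda_0\in\Lambda_0$. I would first transfer this smallness to $w_1-w_2$ via the stability of the linear equation (the difference solves $\partial_t(w_1-w_2)=\tfrac12\Delta(w_1-w_2)+p[\rho_1](w_1-w_2)+(p[\rho_1]-p[\rho_2])w_2$ with small data and small forcing), then to $v[\rho_1]-v[\rho_2]$ in sup norm using the uniform lower bound on $w_i$, and finally to $d_{1T}(\Phi(\rho_1),\Phi(\rho_2))$ by a Gronwall estimate for the two diffusions driven by a common Brownian motion and common initial law $\mu$. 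Because $\lambda_0$ supplies only a modulus of continuity and not a Lipschitz bound, no contraction is available, which is precisely why Schauder rather than Banach is invoked; its application produces a fixed point $\rho^\ast=\Phi(\rho^\ast)$, and $(\rho^\ast,v[\rho^\ast])$ is the sought classical solution of \eqref{eq:nse4}. I expect the two genuine obstacles to be (i) securing the $\rho$-uniform positive lower bound on $w$ so that $v=-\nabla w/w$ inherits the full Schauder regularity needed for a classical solution, and (ii) converting the uniform convergence of the drifts into $d_1$-convergence of the diffusion laws while only assuming the weak modulus $\lambda_0$.
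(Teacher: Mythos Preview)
Your proposal is correct and follows essentially the same route as the paper: time-reverse to obtain a forward Fokker--Planck/backward Hamilton--Jacobi pair, linearize the HJ equation via the Hopf--Cole substitution $w=e^{-\phi}$ (with Feynman--Kac giving the uniform positive lower bound on $w$), represent the density flow as the law of a diffusion, and close with Schauder's fixed-point theorem on a $d_{1T}$-compact convex set of $C^{1/2}$ measure flows using \eqref{eq:a2} only for continuity. The paper packages these steps as a Mean Field Game, writing the fixed-point map as a composition $\Phi^\mu=\Phi_3^\mu\circ\Phi_2\circ\Phi_1$ (data $\to$ optimal control via a verification theorem $\to$ law of the controlled SDE), but the analytic content---the Cole--Hopf linearization, the Feynman--Kac lower bound, the SDE moment/Gronwall stability, and the Schauder argument---is the same as yours.
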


In the above theorem, \eqref{eq:a1}-\eqref{eq:a3}-\eqref{eq:a2} imposes sufficient conditions on the mappings 
$$p: C^{1/2} ([0, T], \mathcal P) \mapsto C^{1/2, 2}([0, T]\times \mathbb T^3, \mathbb R), \ h:  C^{1/2} ([0, T], \mathcal P) \mapsto C^{4}(\mathbb T^3, \mathbb R).$$
In particular, the condition \eqref{eq:a2} can be interpreted as the uniform continuity of $\rho$ and $h$ with modulus of continuity $\lambda_0$, when we apply lower topologies to their respective domains and ranges. 
The proof of Theorem \ref{t:main} will be relegated to the next section. 
\subsubsection{Example}

For the illustration purpose, we will take the following forms of the functions $p$ and $h$ as examples:
\begin{equation}
\label{eq:p2}
p[\rho](t, x)  = \int_{\mathbb T^3} \bar p( x, y) \rho(t, y) dy + \hat p(x),
\end{equation}
\begin{equation}
\label{eq:h2} 
h[\rho](x) = \int_{\mathbb T^3} \bar h( x, y) \rho_T(y) dy + \hat h(x),
\end{equation}
for 
some smooth enough functions 
$\bar p, \bar h: \mathbb T^3 \times \mathbb T^3 \mapsto \mathbb R$ and $\hat p, \hat h: \mathbb T^3 \mapsto \mathbb R$ on their respective domains.
In this below, we will prove \eqref{eq:p2} and \eqref{eq:h2} satisfies  \eqref{eq:a1}-\eqref{eq:a3}-\eqref{eq:a2}. 

From the definition of $p$, 
$p[\rho]$ is differentiable to any order in  the variable $x$ due to the smoothness of $\bar p$ and $\hat p$, 
and in particular it belongs to $C^{0, 2}$. Similarly, $h[\rho]$ belongs to  $C^4$ from its definition.
The $1/2$-H\"older regularity in $t$ can be seen from the following inequality:
\begin{equation}
\label{eq:p_rho}
\begin{array}
{ll}
|p[\rho](t_1, x) - p[\rho](t_2, x)| & = 
|\int \bar p(x, y) (\rho(t_1, y) - \rho(t_2, y)) dy| \\
&\le |D_y \bar p|_0 d_1(\rho(t_1), \rho(t_2)) \\
& \le |D_y \bar p|_0  [\rho]_{1T, 1/2} |t_1 - t_2|^{1/2}. 
\end{array}
\end{equation}
In the above, $D_y \bar p$ is a gradient vector for $y \mapsto \bar p(x, y)$.
\begin{itemize}
\item
Therefore, $p[\rho] \in C^{1/2, 2}$ holds and this implies \eqref{eq:a3}.
It's important to note that, the above estimate indicates $t\mapsto p[\rho](t)$ is $1/2$-H\"older, 
but its norm can be arbitrarily large since $ |D_y \bar p|_0  [\rho]_{1T, 1/2}$ is increasing to 
infinity as $[\rho]_{1T, 1/2}$ increases.

\end{itemize}
Next, for any multiindex $\alpha$, we have
$$
|D_x^\alpha p[\rho]|_0 = 
\Big| \int D^\alpha_x \bar p(x, y) \rho(t, y) dy + D_x^\alpha \hat p(x)|_0 
\le | D^\alpha_x \bar p|_0 + |D_x^\alpha \hat p|_0.
$$
So the estimate of 
$$|p[\rho]|_{0,2} \le |\bar p|_{2, 0} + |\hat p|_2$$ 
is followed by applying the above inequality to $|\alpha|\le 2$.

Similarly, one can write 
$$
|D_x^\alpha h[\rho]|_0 = 
\Big| \int D^\alpha_x \bar h(x, y) \rho(T, y) dy + D_x^\alpha \hat p(x) \Big|_0 
\le | D^\alpha_x \bar h|_0 + |D_x^\alpha \hat h|_0, 
$$
which implies 
$$|h[\rho]|_{4} \le |\bar h|_{4, 0} + |\hat h|_4.$$ 
Hence, 
\begin{itemize}
\item
the uniform boundedness \eqref{eq:a1} holds with a choice of
$$\kappa = |\bar p|_{2, 0} + |\hat p|_2 + |\bar h|_{4, 0} + |\hat h|_4.$$
\end{itemize}
In this below, we collect  inequalities  for the proof of  the uniform continuity \eqref{eq:a2}: 
\begin{enumerate}
\item 
$$
\begin{array}
{ll}
|p[\rho_1] - p[\rho_2]|_0 & = 
\sup_{0\le t\le T, x\in \mathbb R} 
|p[\rho_1](t, x) - p[\rho_2](t, x)| \\
& =  \sup_{0\le t\le T, x\in \mathbb R} 
|\int \bar p(x, y) (\rho_1(t, y) - \rho_2(t, y)) dy| \\
& \le |D_y \bar p|_0 d_{1T}(\rho_1, \rho_2).
\end{array}
$$
\item 
$$
\begin{array}
{ll}
|\partial_{x_j} p[\rho_1] - \partial_{x_j} p[\rho_2]|_0 & = 
\sup_{0\le t\le T, x\in \mathbb R} 
|\partial_{x_j} p[\rho_1](t, x) - \partial_{x_j} p[\rho_2](t, x)| \\
& =  \sup_{0\le t\le T, x\in \mathbb R} 
|\int \partial_{x_j} \bar p(x, y) (\rho_1(t, y) - \rho_2(t, y)) dy| \\
& \le |D_y \partial_{x_j} \bar p|_0 d_{1T}(\rho_1, \rho_2), \ \forall j = 1, 2, 3.
\end{array}
$$
\item Similarly, we have
$$|h[\rho_1] - h[\rho_2]|_0 \le |D_y \bar h|_0 d_{1T}(\rho_1, \rho_2)$$
and 
$$
|D^\alpha h[\rho_1] - D^\alpha h[\rho_2]|_0 \le  |D_y D^\alpha_x\bar h|_0 d_{1T}(\rho_1, \rho_2), \ \forall \alpha.
$$
\end{enumerate}
Therefore, \begin{itemize}
\item
the continuity \eqref{eq:a2} holds with 
$$\lambda_0 (x) = (|\bar p|_{1,1} + |\bar h|_{4,1}) x.$$
\end{itemize}
In view of Theorem \ref{t:main}, we conclude that 
\begin{corollary}
Suppose the functions $\bar p, \bar h, \hat p, \hat h$ satisfies 
$$\bar p \in C^{2,1}(\mathbb T^3 \times \mathbb T^3), \bar h\in C^{4,1}(\mathbb T^3 \times \mathbb T^3), \hat p \in C^2(\mathbb T^3), 
\hat h \in C^4(\mathbb T^3).$$
Then, there exists a classical solution forthe system \eqref{eq:nse4} with $(p, h)$ given in the form of  \eqref{eq:p2} and \eqref{eq:h2}.
\end{corollary}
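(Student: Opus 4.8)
The plan is to deduce the Corollary directly from Theorem~\ref{t:main} by checking that the explicit choices \eqref{eq:p2} and \eqref{eq:h2} meet the three structural hypotheses \eqref{eq:a1}, \eqref{eq:a3}, \eqref{eq:a2}. The single analytic tool underlying every estimate is the Kantorovich--Rubinstein representation of the $1$-Wasserstein metric: for any $g:\mathbb T^3\to\mathbb R$ with Lipschitz constant $L$ one has $|\int_{\mathbb T^3} g\,d\mu-\int_{\mathbb T^3}g\,d\nu|\le L\,d_1(\mu,\nu)$. Since $\bar p(x,\cdot)$ and $\bar h(x,\cdot)$ are smooth on the compact torus, their $y$-Lipschitz constants are controlled uniformly in $x$ by $|D_y\bar p|_0$ and $|D_y\bar h|_0$, and the same remains true after differentiating the kernels in $x$. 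This is precisely the mechanism that converts spatial smoothness of the kernels into Wasserstein-type continuity of $p[\rho]$ and $h[\rho]$.

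For \eqref{eq:a1} and the spatial part of \eqref{eq:a3} I would differentiate under the integral sign, which is legitimate because $\bar p\in C^{2,1}$, $\hat p\in C^2$ and $\mathbb T^3$ is compact: each $D_x^\alpha p[\rho]$ with $|\alpha|\le 2$ is bounded by $|D_x^\alpha\bar p|_0+|D_x^\alpha\hat p|_0$ uniformly in $\rho$, giving $|p[\rho]|_{0,2}\le|\bar p|_{2,0}+|\hat p|_2$; the analogous computation with $\bar h\in C^{4,1}$, $\hat h\in C^4$ gives $|h[\rho]|_4\le|\bar h|_{4,0}+|\hat h|_4$, so \eqref{eq:a1} holds with $\kappa=|\bar p|_{2,0}+|\hat p|_2+|\bar h|_{4,0}+|\hat h|_4$. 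The temporal $1/2$-H\"older bound in \eqref{eq:a3} then follows from the duality estimate applied to $\rho(t_1)$ and $\rho(t_2)$, namely $|p[\rho](t_1,x)-p[\rho](t_2,x)|\le|D_y\bar p|_0\,d_1(\rho(t_1),\rho(t_2))\le|D_y\bar p|_0\,[\rho]_{1T,1/2}\,|t_1-t_2|^{1/2}$, which is finite for each fixed $\rho\in C^{1/2}([0,T],\mathcal P)$.

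For the uniform continuity \eqref{eq:a2} I would apply the same duality to the two measure flows $\rho_1,\rho_2$, uniformly in $(t,x)$: $|p[\rho_1]-p[\rho_2]|_0\le|D_y\bar p|_0\,d_{1T}(\rho_1,\rho_2)$ and, after differentiating the kernel in $x$, $|\partial_{x_j}p[\rho_1]-\partial_{x_j}p[\rho_2]|_0\le|D_y\partial_{x_j}\bar p|_0\,d_{1T}(\rho_1,\rho_2)$ for each $j$, and likewise for every $D^\alpha h$ up to order four. Summing these contributions produces $|p[\rho_1]-p[\rho_2]|_{0,1}+|h[\rho_1]-h[\rho_2]|_4\le(|\bar p|_{1,1}+|\bar h|_{4,1})\,d_{1T}(\rho_1,\rho_2)$, so \eqref{eq:a2} holds with the linear modulus $\lambda_0(x)=(|\bar p|_{1,1}+|\bar h|_{4,1})x$. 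A linear map with positive slope vanishing at $0$ lies in $\Lambda_0$, so the hypothesis on $\lambda_0$ is satisfied and Theorem~\ref{t:main} furnishes a classical solution of \eqref{eq:nse4}.

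The argument carries no essential obstacle beyond bookkeeping; the only points requiring genuine care are the legitimacy of differentiating under the integral (guaranteed by the assumed $C^{2,1}$/$C^{4,1}$ regularity together with compactness of $\mathbb T^3$) and verifying that the resulting modulus $\lambda_0$ indeed belongs to $\Lambda_0$, which is immediate since it is linear. All analytic difficulty resides upstream in Theorem~\ref{t:main}; the Corollary is purely the verification that smooth convolution-type kernels satisfy the regularity-and-continuity package the theorem demands.
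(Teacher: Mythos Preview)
Your proposal is correct and follows essentially the same approach as the paper: the paper verifies \eqref{eq:a1}, \eqref{eq:a3}, \eqref{eq:a2} by differentiating under the integral and invoking the Kantorovich--Rubinstein duality, arriving at the identical constants $\kappa=|\bar p|_{2,0}+|\hat p|_2+|\bar h|_{4,0}+|\hat h|_4$ and $\lambda_0(x)=(|\bar p|_{1,1}+|\bar h|_{4,1})x$, and then applies Theorem~\ref{t:main}.
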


\section{Analysis}

This section is devoted to the proof of the main result given by Theorem \ref{t:main} on the solvability of \eqref{eq:nse4}.
First, if we apply the time reversal mapping $t \mapsto T -t$ to the system \eqref{eq:nse4}, it 
is equivalent to the  system \eqref{eq:fpk2}-\eqref{eq:momentum1} given by
\begin{equation}
\label{eq:fpk2}
\begin{array}
{ll}
\partial_t \rho - \nabla \cdot (\rho v) = \frac 1 2 \Delta \rho, &\hbox{ on } (0, T)\times \mathbb T^3 \\
\rho(0, x) = \mu(x), & \hbox{ on } \mathbb T^3,
\end{array}
\end{equation}
and 
\begin{equation}\label{eq:momentum1}
\begin{array}
{ll}
\partial_t v - v\cdot \nabla v + \frac 1 2 \Delta v = \nabla p[\rho] & \hbox{ on } (0, T)\times \mathbb T^3 \\
v(T, x) =  \nabla h[\rho](x) & \hbox{ on } \mathbb T^3,
\end{array}
\end{equation}
That is,
\begin{itemize}
\item
A pair $(\rho, v)(t, x)$ solves \eqref{eq:fpk2}- \eqref{eq:momentum1} if and only if $(\rho, v)(T-t, x)$ solves the PDE system \eqref{eq:nse4}.
\end{itemize}
Meanwhile, we consider HJB equation given by
\begin{equation}\label{eq:hjb1}
\begin{array}
{ll}
\partial_t u - \frac 1 2 |\nabla u|^2 + \frac 1 2 \Delta u - p[\rho] = 0, & \hbox{ on } (0, T)\times \mathbb T^3 \\
u(T, x) = h[\rho](x) & \hbox{ on } \mathbb T^3.
\end{array}
\end{equation}
Interestingly, one can check that, 
\begin{itemize}
\item 
if $(\rho, u)$ solves \eqref{eq:fpk2}-\eqref{eq:hjb1}, 
then $(\rho, \nabla u)$ solves  \eqref{eq:fpk2}- \eqref{eq:momentum1}, and hence
$(\rho, \nabla u)(T-t, x)$ solves \eqref{eq:nse4}. 
\end{itemize}
Therefore, to prove the solvability of \eqref{eq:nse4}, it is enough to prove the solvability of  \eqref{eq:fpk2}-\eqref{eq:hjb1}.


\subsection{Probabilistic setting of MFG}
In this section, we provide a probabilistic setting for the MFG on a torus leading to \eqref{eq:fpk2}-\eqref{eq:hjb1}. 
We refer \cite{GPV16} for analytical MFG settings for the comparison.

\subsubsection{Generic player's controlled dynamic and Dynkin's formula on a torus}

Let $(\Omega, \mathbb P, \mathcal F, (\mathcal F_t)_{t\ge 0})$ be a filtered probability space satisfying the usual conditions with $W$ being a $\mathcal F_t$-adapted $ \mathbb R^3$-valued Brownian motion.
In this section, we set up a  mean field game on 
the state space $3$-torus $\mathbb T^3 = \mathbb R^3/\mathbb Z^3$.
A generic player's position $X^{v, t, \xi}$ at a velocity $-v$ perturbed by a white noise starting from initial time and position $(t, x)$  follows 
\begin{equation}
\label{eq:X}
X^{v, t, \xi}(r) = \xi - \int_t^r v(s, X^{v, t, \xi}_s) ds + W_r - W_t,
\end{equation}
where $X^{v, t, \xi}(t)$ and $\xi$ are random values in $\mathbb T^3$,    $v: [0, T]\times \mathbb T^3\mapsto \mathbb R^3$ is the player's control.
In \eqref{eq:X}, the equal sign is interpreted up to the coset, i.e. \eqref{eq:X} can be rewritten by
$$
X^{v, t, \xi}(r) = \xi - \int_t^r v(s, X^{v, t, \xi}_s) ds + W_r - W_t + \mathbb Z^3.
$$

To facilitate the subsequent analysis, we also justify the Dynkin's formula on a diffusion \eqref{eq:X} defined on $\mathbb T^3$.
\begin{lemma}
\label{l:fpk1}
If $v:[0, T]\times \mathbb T^3 \mapsto \mathbb R^3$ is a continuous in both variables and Lipschitz 
on $\mathbb T^3$, then the SDE \eqref{eq:X} has a unique strong solution
provided by $X^{v, t, \xi} = \pi(X^{\bar v, t, \xi'})$, where $X^{\bar v, t, \xi'}$ is the $ \mathbb R^3$-valued random process given by
\begin{equation}
\label{eq:X3}
X^{\bar v, t, \xi'}(r)  = \xi' - \int_t^r \bar v(s, X^{\bar v,  t, \xi'}(s)) ds + W_r - W_t,
\end{equation}
whenever  $ \pi(\xi') = \xi$ and $\bar v(t, x) = v(t, \pi(x))$. Moreover, the following Dynkin's formula holds for any $f\in C^{1,2} ([0, T]\times \mathbb T^3, \mathbb R)$,
\begin{equation}
\label{eq:dynkin}
\mathbb E f(r, X^{v, t, \xi} (r)) = 
\mathbb E \left[ f(\xi) + \int_t^r (\partial_t f - v\cdot \nabla f + \frac 1 2 \Delta f)(s, X^{v, t, \xi}(s)) ds \right].
\end{equation}
\end{lemma}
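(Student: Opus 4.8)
The plan is to prove the two assertions of Lemma~\ref{l:fpk1} in sequence: first the reduction of the torus-valued SDE~\eqref{eq:X} to a lifted $\mathbb R^3$-valued SDE~\eqref{eq:X3}, together with existence and uniqueness of a strong solution; and second the Dynkin formula~\eqref{eq:dynkin}. The guiding idea is that the torus is a quotient of $\mathbb R^3$ by a discrete lattice action, and that the coefficient $\bar v$ is the $\mathbb Z^3$-periodic lift of $v$, so all computations may be performed upstairs in Euclidean space and then projected down through the covering map $\pi$.

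First I would establish the lifted SDE~\eqref{eq:X3}. Given the hypothesis that $v$ is continuous in both variables and Lipschitz on $\mathbb T^3$, its lift $\bar v(t,x) = v(t,\pi(x))$ is continuous, bounded, and globally Lipschitz in $x$ on $\mathbb R^3$, uniformly in $t$: boundedness follows from periodicity and continuity on the compact torus, and the Lipschitz bound transfers because the quotient metric $|\cdot|_{\mathbb T^3}$ is dominated by the Euclidean distance. With these standard drift conditions, the classical It\^o existence–uniqueness theorem gives a unique strong $\mathbb R^3$-valued solution $X^{\bar v, t, \xi'}$ of~\eqref{eq:X3} for any lift $\xi'$ with $\pi(\xi')=\xi$. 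Projecting, I set $X^{v,t,\xi} = \pi(X^{\bar v,t,\xi'})$ and verify it solves~\eqref{eq:X} in the coset sense, using that $\bar v(s, X^{\bar v,t,\xi'}(s)) = v(s, X^{v,t,\xi}(s))$ by definition of the lift. For uniqueness on the torus I would argue that any torus-valued solution admits a continuous lift (the Brownian increments and drift integral being continuous, one lifts the path through $\pi$ starting from a fixed $\xi'$), and that this lift must solve~\eqref{eq:X3}; uniqueness upstairs then forces uniqueness downstairs, with the law of $X^{v,t,\xi}$ independent of the choice of $\xi'$ because a lattice shift of $\xi'$ shifts the whole solution by the same lattice vector and is annihilated by $\pi$.

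Second, for the Dynkin formula I would lift the test function as well. Given $f \in C^{1,2}([0,T]\times\mathbb T^3,\mathbb R)$, its composition $\bar f(t,x) := f(t,\pi(x))$ is $\mathbb Z^3$-periodic and lies in $C^{1,2}([0,T]\times\mathbb R^3,\mathbb R)$, with $\partial_t \bar f$, $\nabla \bar f$, and $\Delta \bar f$ equal to the lifts of the corresponding torus derivatives. Applying the ordinary It\^o formula to $\bar f(r, X^{\bar v,t,\xi'}(r))$ along~\eqref{eq:X3} produces the drift terms $(\partial_t \bar f - \bar v\cdot\nabla\bar f + \tfrac12\Delta\bar f)$ plus a stochastic integral $\int_t^r \nabla\bar f\,dW$. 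Taking expectations kills the martingale term once I check its integrability, which holds because $\nabla\bar f$ is bounded by periodicity and continuity on the compact torus, so the integrand is uniformly bounded and the It\^o integral is a true martingale with zero mean. Translating every lifted quantity back through $\pi$ — using $\bar f(r,X^{\bar v,t,\xi'}(r)) = f(r,X^{v,t,\xi}(r))$ and likewise for the derivatives — yields exactly~\eqref{eq:dynkin}.

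The main obstacle I anticipate is the uniqueness claim on the torus and, relatedly, the measurability and well-posedness of the lifting procedure for paths: one must ensure that an arbitrary torus-valued strong solution lifts to a \emph{continuous, adapted} $\mathbb R^3$-valued process solving~\eqref{eq:X3}, so that Euclidean uniqueness applies. This requires a careful pathwise lifting argument exploiting that $\pi$ is a covering map and that the candidate lift is built from continuous data (the Brownian path and the drift integral), together with an adaptedness check so that the lifted process is a legitimate strong solution. The It\^o-calculus content of both parts is routine once the lift is in place; the genuinely delicate point is the topological bookkeeping that makes the passage between $\mathbb R^3$ and $\mathbb T^3$ rigorous.
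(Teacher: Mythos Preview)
Your proposal is correct and follows essentially the same route as the paper: lift to $\mathbb R^3$ via the periodic extension $\bar v$, invoke classical strong existence--uniqueness there, project through $\pi$, and obtain Dynkin's formula by applying It\^o to the periodic lift $\bar f$ and using boundedness of $\nabla\bar f$ to kill the martingale term. The paper's uniqueness check is in fact lighter than the obstacle you anticipate: it only verifies that a lattice shift $\xi'' = \xi' + z$ of the initial lift produces the shifted solution $X^{\bar v,t,\xi''} = X^{\bar v,t,\xi'} + z$ (by periodicity of $\bar v$), hence the same projection, and does not carry out the full pathwise lifting of an arbitrary torus-valued solution that you flag as the delicate point.
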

\begin{proof}
There exists a unique solution for \eqref{eq:X3}
due to the Lipschitz continuity of $\bar v$ in $x$. Therefore, the existence of \eqref{eq:X} is provided by
$$X^{v, t, \xi} (r) = X^{\bar v, t, \xi'} (r) + \mathbb Z^3.$$
For the uniqueness, one shall check 
$\pi(X^{\bar v, t,  \xi'} (r)) = \pi(X^{\bar v, t, \xi''} (r))$ for all $r$ whenever $\pi(\xi') = \pi(\xi'') = \xi$.
Indeed, if $\xi'' = \xi' + z$ for some random variable $z\in \mathbb Z^3$, then 
one can directly verify $X^{\bar v, t, \xi''}(r) = X^{\bar v, t, \xi'}(r) + z$ using the periodicity of $\bar v(t, \cdot)$.

To show \eqref{eq:dynkin}, we use Ito formula on $\bar f (r, X^{\bar v, t, \xi'} (r))$ with the periodic function $\bar f(t, x) = f(t, \pi(x))$ and eliminate the martingale part since
$\bar v \cdot \nabla \bar f$ is uniformly bounded.
\end{proof}

The next moment estimations are taken from \cite{Kry80} on the solution of SDE, which will be useful in the subsequent proof.
\begin{lemma}
\label{l:moments}
Suppose $X_i$ for $i = 1, 2$ satisfies
$$d X_i(s) = - v(s, X_i(s)) ds + dW_s, \ X_i(0) = \xi_i.$$
Then, the following estimation holds:
$$\mathbb E \sup_{s\in [0, T]} |X_1(s) - X_2(s)|^2 \le \lambda(|v_1|_{0,1} \vee |v_2|_{0,1}) (|\xi_1 - \xi_2|^2 + |v_1 - v_2|_0^2)$$
for some $\lambda \in \Lambda$. In the above, $a\vee b$ is the maximum of $a$ and $b$.
\end{lemma}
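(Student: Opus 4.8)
The plan is to exploit the fact that both diffusions are driven by the \emph{same} Brownian motion with identical (constant) diffusion coefficient, so that the noise cancels in the difference and the whole argument reduces to a pathwise Gronwall estimate, with no need for Burkholder--Davis--Gundy or Doob-type martingale bounds. Writing $D(s) := X_1(s) - X_2(s)$ and subtracting the integral forms of the two equations (each $X_i$ driven by the drift $-v_i$),
\[
D(s) = (\xi_1 - \xi_2) - \int_0^s \big( v_1(r, X_1(r)) - v_2(r, X_2(r)) \big)\, dr,
\]
the increments $W_s - W_0$ cancel, so $s \mapsto D(s)$ is absolutely continuous along almost every path and is of finite variation. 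Thus one can argue pathwise on a set of full probability, which is the key simplification.

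Next I would split the drift increment by the triangle inequality into a Lipschitz part and a uniform part,
\[
v_1(r, X_1(r)) - v_2(r, X_2(r)) = \big( v_1(r, X_1(r)) - v_1(r, X_2(r)) \big) + \big( v_1(r, X_2(r)) - v_2(r, X_2(r)) \big).
\]
For the first bracket I use that the spatial Lipschitz constant of $v_1$ is controlled by $|v_1|_{0,1}$, since that norm contains $|D_x v_1|_0$; this gives $|v_1(r, X_1(r)) - v_1(r, X_2(r))| \le |v_1|_{0,1}\, |D(r)|$. The second bracket is bounded by $|v_1 - v_2|_0$. This produces the scalar integral inequality
\[
|D(s)| \le |\xi_1 - \xi_2| + T\,|v_1 - v_2|_0 + |v_1|_{0,1} \int_0^s |D(r)|\, dr,
\]
valid pathwise for every $s \in [0, T]$.

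Applying Gronwall's inequality pathwise then yields $\sup_{s\in[0,T]} |D(s)| \le ( |\xi_1 - \xi_2| + T\,|v_1 - v_2|_0 )\, e^{|v_1|_{0,1} T}$, and squaring (via $(a+b)^2 \le 2a^2 + 2b^2$) and taking expectations gives
\[
\mathbb E \sup_{s\in[0,T]} |D(s)|^2 \le 2 e^{2|v_1|_{0,1} T}\max\{1,T^2\}\big( \mathbb E|\xi_1 - \xi_2|^2 + |v_1 - v_2|_0^2 \big).
\]
Since $|v_1|_{0,1} \le |v_1|_{0,1} \vee |v_2|_{0,1}$, the claim follows with $\lambda(x) = 2\max\{1, T^2\}\, e^{2Tx}$, which indeed lies in $\Lambda$ (it is strictly increasing and diverges as $x\to\infty$), the monotonicity of $\lambda$ letting us evaluate at the maximum of the two norms. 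Here $\mathbb E|\xi_1-\xi_2|^2$ reduces to the stated $|\xi_1-\xi_2|^2$ when the initial data are deterministic.

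The only genuinely delicate point is the torus setting, which I expect to be a matter of bookkeeping rather than a real obstacle. I would pass to the $\mathbb R^3$-valued lifts provided by Lemma \ref{l:fpk1}, so that the computation above is the standard Euclidean one; periodicity of the $v_i$ guarantees that $|v_1 - v_2|_0$ and the Lipschitz constants are unchanged whether measured on $\mathbb T^3$ or on $\mathbb R^3$, while the torus distance between the projected processes is dominated by the Euclidean distance of the lifts, so the estimate transfers directly.
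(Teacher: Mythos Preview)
Your argument is correct. The paper does not actually prove this lemma; it simply records the estimate as ``taken from \cite{Kry80}'' and moves on. So there is no proof in the paper to compare against, only an external reference to Krylov's book on controlled diffusions.

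Your route is, if anything, cleaner than the general moment estimates one finds in \cite{Kry80}. Those are written for SDEs with possibly different diffusion coefficients and therefore proceed via It\^o's formula on $|X_1-X_2|^2$, a BDG or Doob maximal inequality for the resulting martingale term, and then Gronwall on the expectation. You correctly observe that in the present situation both equations carry the \emph{same} additive noise $dW_s$, so the difference $D(s)$ has no martingale part at all and the estimate becomes a purely deterministic pathwise Gronwall bound. This is a genuine simplification, and it also explains why the constant you obtain depends only on $|v_1|_{0,1}$ (or its maximum with $|v_2|_{0,1}$) through an exponential, with no implicit BDG constants. Your handling of the torus via the $\mathbb R^3$-lift of Lemma~\ref{l:fpk1} is the right bookkeeping step, and the remark that the torus metric is dominated by the Euclidean distance of the lifts closes the loop.
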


\subsubsection{MFG formulation as a fixed point} \label{s:mfg}
In this part, we will define MFG Nash equilibrium 
via a composition of three operators $\Phi_1: \mathcal D_1 \mapsto \mathcal D_2$,
$\Phi_2: \mathcal D_2 \mapsto \mathcal D_3$, and
$\Phi_3: \mathcal D_3 \mapsto \mathcal D_1$, where
\begin{itemize}
\item $\mathcal D_1, \mathcal D_2, \mathcal D_3$ are metric spaces 
\begin{itemize}
\item
$\mathcal D_1$ is the set of  measure valued processes $\rho$ in  $C^{1/2} ([0, T], \mathcal P)$ 
with a topology induced by $d_{1T}(\cdot, \cdot)$;
\item $\mathcal D_2$ is the collection of all function pairs $(p, h)$ in 
$C^{1/2, 2}([0, T] \times \mathbb T^3) \times C^4(\mathbb T^3)$
with a topology induced by 
$|p|_{0, 2} + |h|_4;$
\item 
$\mathcal D_3$ is the collection of the function $v$ in $C^{1, 2} ([0, T]\times \mathbb T^3, \mathbb R^3)$
with a topology induced by $|\cdot|_0$.
\end{itemize}

\item
Let $\Phi_1: \mathcal D_1 \mapsto \mathcal D_2$ be defined by
$$\Phi_1[ \rho ] = (p, h) [\rho],$$
where $p$ and $h$ are functions given in the system  \eqref{eq:nse4}.
\item
We define $\Phi_2: \mathcal D_2 \mapsto \mathcal D_3$ 
from the following control problem.

Let $J[p,h, v]$ be  the accumulated total cost of the player  given by 
$$J[p,h, v](t, x) = \mathbb E \left [\int_t^T
\Big(\frac 1 2 |v|^2 - p \Big)(s, X^{v, t, x}(s)) ds + h(X^{v, t, x} (T)) \Big| \mathcal F_t \right].$$
where $X^{v, t, x}$ is the the controlled process of \eqref{eq:X}, the function
$p: \mathbb R^+\times \mathbb T^3 \mapsto \mathbb R$ is a given running cost, and 
$h:  \mathbb T^3 \mapsto \mathbb R$ is a given terminal cost.
The objective of the player is to minimize the total cost. 

For any $(p, h) \in \mathcal D_2$, the $\Phi_2[p,h]$ is defined to be the optimal feedback control if it exists in the space $\mathcal D_3$, i.e.
the following optimality condition 
$$J [p,h, \Phi_2[p,h]](t, x) \le J[p,h, v](t, x), \ \forall (t, x) \in [0, T] \times \mathbb T^3$$
for all $v \in \mathcal D_3$.

\item Let $\mu\in \mathcal P$ be a given probability density on $\mathbb T^3$. 
We define  $\Phi_3^{\mu}: \mathcal D_3 \mapsto \mathcal D_1$ as the mapping from $v\in \mathcal D_3$ to  the 
corresponding distribution flow at a velocity $-v$ generated by  $\{X^{v, 0, \xi}_t: 0\le t \le T\}$ of 
\eqref{eq:X} for some initial state $\xi \in \mathcal F_0$
satisfying $\mathcal L(\xi) = \mu$, i.e.
$\Phi_3^{\mu} [v](t) = \mathcal L (X^{v, 0, \xi}_t)$ for $v\in \mathcal D_3$ and $t\in [0, T]$,  or equivalently
$$\langle \Phi_3^{\mu} [v](t), \phi\rangle = \mathbb E[ \phi(X_t^{v, 0, \xi}) ], 
\ \forall \phi \in C^\infty(\mathbb T^3, \mathbb R), t \in [0, T].$$

\end{itemize}

Note that, if the above three operators are well defined, then the composition  
$$ \Phi^{\mu} = \Phi_3^{\mu} \circ \Phi_2 \circ \Phi_1 $$ 
is a mapping from $\mathcal D_1$ to itself. 
Next, we will define the solution of MFG as the fixed point of $\Phi^{\mu}$.
\begin{definition}
\label{d:mfg}
Given an initial distribution $\mu \in \mathcal P_1$, 
the Nash equilibrium measure $\rho$ of the mean field game is the distribution flow satisfying the fixed point condition
$$\rho = \Phi^{\mu}  [\rho].$$
\end{definition}

\subsection{Estimates on the $\Phi_3^\mu$}
Recall the subsection \ref{s:mfg} that, 
the operator $\Phi_3^{\mu}[v]$ is the distribution flow of $X^{v, 0, \xi}$ given by the dynamic \eqref{eq:X} associated to a  control $v$ 
and a smooth initial distribution $\mathcal L(\xi) = \mu$.

\begin{lemma}
\label{l:fpk}
The operator $\Phi_3^{\mu}$ is a well defined mapping from $\mathcal D_3$ to $\mathcal D_1$ satisfying
following estimations: There exists $\lambda \in \Lambda$, such that
$$|\Phi_3^{\mu} [v]|_{1T, 1/2} \le \lambda (|v|_0), \ \forall v\in \mathcal D_3, $$
and 
$$d_{1T} (\Phi_3^\mu(v_1), \Phi_3^\mu(v_2)) 
\le \lambda(|v_1|_{0, 1} \vee |v_2|_{0, 1}) |v_1 - v_2|_{0}.
$$
Furthermore, $\rho(t, x) = \Phi_3^{\mu} (t, x)$ is the unique classical solution of FPK  \eqref{eq:fpk2}.
\end{lemma}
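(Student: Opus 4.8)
The plan is to establish the four assertions in turn, using Lemma \ref{l:fpk1} for well-posedness of the dynamics, Lemma \ref{l:moments} for the comparison estimate, and classical parabolic theory together with a duality argument for the Fokker--Planck claim. For well-definedness, note that any $v\in\mathcal D_3=C^{1,2}$ is continuous in $(t,x)$ and, being twice continuously differentiable in the space variable on the compact torus, uniformly Lipschitz in $x$. Lemma \ref{l:fpk1} then gives a unique strong solution of \eqref{eq:X} from any $\xi$ with $\mathcal L(\xi)=\mu$, so $\rho(t):=\mathcal L(X^{v,0,\xi}_t)$ is a well-defined element of $\mathcal P$ for each $t$; its continuity in $t$ will follow from the next step.

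For the H\"older-$1/2$ estimate I would bound $d_1(\rho(t),\rho(t'))$ by the diagonal coupling induced by the single lifted process, so that $d_1(\rho(t),\rho(t'))\le\mathbb E|X_t-X_{t'}|$, where on the right one may use the Euclidean distance between the $\mathbb R^3$-lifts since the torus metric is dominated by it. From \eqref{eq:X} one has $X_t-X_{t'}=-\int_{t'}^t v(s,X_s)\,ds+(W_t-W_{t'})$, so the drift contributes at most $|v|_0|t-t'|\le |v|_0 T^{1/2}|t-t'|^{1/2}$ and, by Cauchy--Schwarz, the Brownian part at most $(\mathbb E|W_t-W_{t'}|^2)^{1/2}=\sqrt3\,|t-t'|^{1/2}$. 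Hence $[\rho]_{1T,1/2}\le|v|_0T^{1/2}+\sqrt3$, while $|\rho|_{1T}$ is bounded by the finite diameter of $\mathbb T^3$; choosing $\lambda\in\Lambda$ dominating $x\mapsto\mathrm{diam}(\mathbb T^3)+\sqrt3+T^{1/2}x$ gives $|\Phi_3^\mu[v]|_{1T,1/2}\le\lambda(|v|_0)$ and shows $\Phi_3^\mu$ maps $\mathcal D_3$ into $\mathcal D_1$.

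For the dependence on $v$, I would couple $X_1,X_2$ by running them from the same $\xi$ and the same Brownian motion, so that $d_1(\Phi_3^\mu(v_1)(t),\Phi_3^\mu(v_2)(t))\le\mathbb E|X_1(t)-X_2(t)|\le(\mathbb E\sup_s|X_1(s)-X_2(s)|^2)^{1/2}$; applying Lemma \ref{l:moments} with $\xi_1=\xi_2$ annihilates the initial-data term and yields the stated bound after replacing $\lambda$ by $\lambda^{1/2}$, which remains in $\Lambda$. For the Fokker--Planck claim, I would first apply Dynkin's formula \eqref{eq:dynkin} to time-dependent test functions to show that $t\mapsto\langle\rho(t),\phi\rangle$ satisfies $\frac{d}{dt}\langle\rho(t),\phi\rangle=\langle\rho(t),-v\cdot\nabla\phi+\tfrac12\Delta\phi\rangle$ for all $\phi\in C^{1,2}$, i.e. $\rho$ is a weak solution of \eqref{eq:fpk2} with $\rho(0)=\mu$, since integrating by parts turns this identity into $\partial_t\rho-\nabla\cdot(\rho v)=\tfrac12\Delta\rho$. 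Since the diffusion is uniformly non-degenerate with Lipschitz bounded drift and $\mu$ is a smooth density, the law $\rho(t,\cdot)$ has a smooth density solving \eqref{eq:fpk2} classically by linear parabolic theory.

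For uniqueness I would pair any classical solution $\tilde\rho$ against the solution $\phi$ of the backward problem $\partial_s\phi-v\cdot\nabla\phi+\tfrac12\Delta\phi=0$, $\phi(T,\cdot)=\psi$: the $s$-derivative of $\langle\tilde\rho(s),\phi(s)\rangle$ vanishes, forcing $\langle\tilde\rho(T),\psi\rangle=\langle\mu,\phi(0)\rangle$ for every test $\psi$, which determines $\tilde\rho(T)$ and, letting the terminal time vary, all of $\tilde\rho$. The Feynman--Kac representation $\phi(0,x)=\mathbb E[\psi(X^{v,0,x}_T)]$ then identifies this common value with $\mathcal L(X^{v,0,\xi}_T)=\rho(T)$, so $\rho=\Phi_3^\mu[v]$ is the unique classical solution. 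The hard part will be the regularity and uniqueness in this last step: upgrading the weak solution to a classical one under the relatively weak hypothesis $v\in C^{1,2}$ (so that $\nabla\cdot v$ is only just continuous) and making the duality argument fully rigorous, including the identification of the PDE solution with the law of the diffusion. The first three assertions are comparatively routine consequences of the coupling method and Lemma \ref{l:moments}.
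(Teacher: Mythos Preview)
Your proposal is correct and follows essentially the same route as the paper: diagonal coupling of $(X_t,X_{t'})$ for the $1/2$-H\"older estimate, coupling $X_1,X_2$ from the same $\xi$ and Brownian motion together with Lemma~\ref{l:moments} for the Lipschitz-in-$v$ bound, and the probabilistic SDE/FPK correspondence for the last claim. The only differences are cosmetic: the paper cites \cite{Car13} for the bound $d_1(\rho(t),\rho(s))\le(1+\sqrt{T}|v|_0)|t-s|^{1/2}$ rather than deriving it, and bounds $|\rho|_{1T}$ via $\int|x|\mu+|v|_0T+\sqrt{T}$ whereas you (more simply, and legitimately on a compact state space) use the diameter of $\mathbb T^3$; and the paper simply asserts ``its distribution is the classical solution of FPK'' while you sketch the weak-solution/duality argument in more detail.
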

\begin{proof}
If $v\in \mathcal D_3$, then there exists unique strong solution for SDE \eqref{eq:X}, and its distribution is 
the classical solution for FPK \eqref{eq:fpk2}. 
A similar calculation of \cite{Car13}  yields
$$d_1(\rho(t), \rho(s)) \le (1+ \sqrt T |v|_0 ) |t -s|^{1/2},
$$
which implies 
$$[\rho]_{1T, 1/2} \le \lambda ( |v|_0 )$$
for some $\lambda \in \Lambda$.  Moreover,  
$$
|\rho|_{1T} = \sup_t \int_{{\mathbb T^3}} |x| \rho(t, x) dx \le
 \int_{{\mathbb T^3}} |x| \mu(x)dx + |v|_0 T + \sqrt T  
 \le \lambda ( |v|_0 )
$$
for possibly different  $\lambda \in \Lambda$.
Therefore, we conclude that  $\rho$ belongs to $\mathcal D_1$ 
satisfying the first estimate.

Suppose $X_i = X_i^{v_i, 0, \xi}$ is a solution of \eqref{eq:X} corresponding to $v_i \in \mathcal D_3$ and $\xi_1 = \xi_2$, 
and the distribution flow is denoted by $\mathcal L (X_i(t)) = \rho_i(t)$.
By Lemma \ref{l:moments}, the following calculations lead to the second estimation:
$$\begin{array}
{ll}
d_{1T} (\rho_1, \rho_2)&   \displaystyle = \sup_t d_1(\rho_1(t), \rho_2(t)) 
\\ & \displaystyle \le \sup_t \mathbb E |X_1(t) - X_2(t)| 
\\ & \displaystyle \le \lambda(|v_1|_{0,1} \vee |v_2|_{0,1}) |v_1 - v_2|_0.
\end{array}
$$
for some $\lambda \in \Lambda$.
\end{proof}

\subsection{Estimates on $\Phi_2$} \label{s:phi2}

Recall subsection \ref{s:mfg} that,
$\Phi_2[p, h]$ is defined as the optimal feedback control, if it exists. 
\subsubsection{ Verification Theorem}
In the next, 
Lemma \ref{l:verification} shows that $\Phi_2$ is indeed well defined. 
Moreover, it provides the connection 
of the control problem, the associated HJB, and the momentum equation, which can be considered as an adapted version of verification theorem on a torus.
\begin{lemma}[Verification theorem]
\label{l:verification}
Suppose $(p, h) \in \mathcal D_2$, then 
there exists a unique  solution $u \in C^{1,3}([0, T]\times \mathbb T^3, \mathbb R)$ for the HJB equation
\begin{equation}\label{eq:hjb2}
\left\{
\begin{array}
{ll}
\partial_t u - \frac 1 2 |\nabla u|^2 + \frac 1 2 \Delta u - p = 0, & \hbox{ on } (0, T)\times \mathbb T^3 \\
u(T, x) = h(x) & \hbox{ on } \mathbb T^3.
\end{array}
\right.
\end{equation}
and $v = \nabla u$ is the unique classical solution of the momentum equation
\begin{equation}\label{eq:momentum2}
\left\{
\begin{array}
{ll}
\partial_t v - v\cdot \nabla v + \frac 1 2 \Delta v = \nabla p & \hbox{ on } (0, T)\times \mathbb T^3 \\
v(T, x) =  \nabla h(x) & \hbox{ on } \mathbb T^3.
\end{array}
\right.
\end{equation}
Moreover, the operator 
$\Phi_2: \mathcal D_2\mapsto \mathcal D_3$ defined from the control problem is well defined and satisfies
$$\Phi_2[p,h] (t,x)= v(t, x).$$ 
\begin{proof}
By Hopf-Cole transformation  $w = e^{-u}$, 
$u$ solves \eqref{eq:hjb2} if and only if $w$ solves
\begin{equation}
\label{eq:pde_w}
\left\{
\begin{array}
{ll}
\partial_t w + \frac 1 2 \Delta w + w p= 0, &  (0, T) \times \mathbb T^3 \\
w(T, x) = e^{-h(x)},  & \mathbb T^3. \\
\end{array}
\right.
\end{equation}
By Lemma 3.6 of \cite{CHHJS20}, there exists the unique $C^{1,3}$ solution $w$ for \eqref{eq:pde_w}.
In addition, $w$ takes strictly positive values by maximum principle.
Therefore, $u = - \ln w$ of \eqref{eq:hjb2} also belongs to $C^{1,3}$ and $v = - \nabla u$ is in $C^{1,2}$ and it solves \eqref{eq:momentum2}.

Next, we prove $\Phi_2[p,h] = \nabla u$. 
Let $X_s = X^{v, t, x}(s)$ be the solution of \eqref{eq:X} starting from $X_t = x$ with control $v$.
Since $u$ is in $C^{1,3}$, one can use Dynkin's formula of Lemma \ref{l:fpk1} and obtain
$$
\begin{array}
{ll}
\mathbb E [u(T, X_T)] & = u(t, x) + \mathbb E \left[ \int_t^T (\partial_t u + v \cdot \nabla u + \frac 1 2 \Delta u)(s, X_s) ds \Big| X_t = x\right] \\
& \ge u(t, x) - \mathbb E \left[ ( \frac 1 2 |v|^2 - p)(s, X_s) ds \right].
\end{array}
$$
In the above, we used 
$$v\cdot \nabla u + \frac 1 2 |v|^2 \ge - \frac 1 2 |\nabla u|^2. $$ 
Rearranging the above inequality, it leads to 
$u(t, x) \le J[p, h, v]$ and the inequality becomes equality only if $v = \nabla u$.
\end{proof}
\end{lemma}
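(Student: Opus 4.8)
The plan is to treat the three assertions of the lemma in sequence: first obtain a unique regular solution $u$ of the Hamilton--Jacobi--Bellman equation \eqref{eq:hjb2}, then produce $v=\nabla u$ as a solution of the momentum equation \eqref{eq:momentum2} and argue its uniqueness, and finally run a verification argument identifying $\nabla u$ as the optimal feedback, thereby pinning down $\Phi_2[p,h]$. The only genuinely nonlinear object is \eqref{eq:hjb2}, so I would remove the nonlinearity at the outset by a Hopf--Cole substitution, reducing everything to linear parabolic theory plus a clean stochastic optimality computation.

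For the first step I set $w = e^{-u}$. A direct computation using $\partial_t w = -w\,\partial_t u$, $\nabla w = -w\,\nabla u$ and $\Delta w = -w\,\Delta u + w|\nabla u|^2$ shows that $u$ solves \eqref{eq:hjb2} if and only if $w$ solves the linear backward equation
\begin{equation*}
\partial_t w + \tfrac12\Delta w + p\,w = 0 \quad\text{on } (0,T)\times\mathbb T^3, \qquad w(T,\cdot) = e^{-h}.
\end{equation*}
Since $(p,h)\in\mathcal D_2$ gives $p\in C^{1/2,2}$ and $h\in C^4$, the terminal datum $e^{-h}$ and the zeroth-order coefficient $p$ are regular enough for the cited linear Schauder theory (Lemma 3.6 of \cite{CHHJS20}) to furnish a unique solution $w\in C^{1,3}([0,T]\times\mathbb T^3)$. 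Because $e^{-h}$ is bounded away from $0$ and $\infty$ and the equation is linear, the maximum principle keeps $w$ strictly positive, so $u = -\ln w$ is well defined, inherits the $C^{1,3}$ regularity, and is the unique solution of \eqref{eq:hjb2}.

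Next I differentiate \eqref{eq:hjb2} in space. Writing $v=\nabla u$ and using that a gradient field is curl-free, one has $\tfrac12\nabla|\nabla u|^2 = (v\cdot\nabla)v$ and $\tfrac12\nabla\Delta u = \tfrac12\Delta v$, so applying $\nabla$ to \eqref{eq:hjb2} yields exactly $\partial_t v - v\cdot\nabla v + \tfrac12\Delta v = \nabla p$ with $v(T,\cdot)=\nabla h$, that is, \eqref{eq:momentum2}; and since $u\in C^{1,3}$, we get $v=\nabla u\in C^{1,2}=\mathcal D_3$. For uniqueness I would show that the conservative (curl-free) structure of the terminal datum $\nabla h$ propagates backward in time: the vorticity $\nabla\times v$ of any classical solution satisfies a linear parabolic equation with vanishing terminal data, hence is identically zero, so every classical solution is a gradient $v=\nabla\tilde u$. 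Substituting into \eqref{eq:momentum2} shows $\tilde u$ solves \eqref{eq:hjb2} up to a purely time-dependent constant fixed by the terminal condition, and uniqueness for \eqref{eq:hjb2} forces $\tilde u=u$, whence $v=\nabla u$ is the unique classical solution.

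Finally, for the control characterization I apply Dynkin's formula (Lemma \ref{l:fpk1}) to $u\in C^{1,3}\subset C^{1,2}$ along the controlled process $X^{v,t,x}$ driven by an arbitrary $v\in\mathcal D_3$. Using $u(T,\cdot)=h$ and substituting $\partial_t u + \tfrac12\Delta u = \tfrac12|\nabla u|^2 + p$ from \eqref{eq:hjb2}, the terminal expectation rearranges into
\begin{equation*}
J[p,h,v](t,x) = u(t,x) + \tfrac12\,\mathbb E\!\left[\int_t^T |v-\nabla u|^2(s,X^{v,t,x}(s))\,ds \;\Big|\;\mathcal F_t\right] \;\ge\; u(t,x),
\end{equation*}
with equality precisely when $v=\nabla u$. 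Hence $u$ is the value function, $\nabla u$ is the unique minimizing feedback, and $\Phi_2[p,h]=\nabla u=v$ is well defined. I expect the main obstacle to be the uniqueness half of the middle step --- verifying that the curl-free structure is genuinely preserved backward in time at the available $C^{1,2}$ regularity --- together with the regularity bookkeeping in the Hopf--Cole passage needed to guarantee $v=\nabla u\in C^{1,2}$ and to license the application of Dynkin's formula; the existence and the optimality computations themselves are routine once the linear theory is in hand.
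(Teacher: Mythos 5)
Your proposal follows essentially the same route as the paper: Hopf--Cole reduction $w=e^{-u}$ to the linear equation, existence and $C^{1,3}$ regularity of $w$ via Lemma 3.6 of \cite{CHHJS20}, strict positivity by the maximum principle, and a verification argument via Dynkin's formula along the controlled dynamics. Your completion-of-square identity
$J[p,h,v]=u+\tfrac12\,\mathbb E\!\left[\int_t^T |v-\nabla u|^2\,ds\right]$
is in fact a cleaner rendering of the paper's inequality step, and your signs are internally consistent with the drift $-v$ in \eqref{eq:X} (the paper's written proof contains sign slips: it displays $+v\cdot\nabla u$ inside Dynkin's formula and at one point writes $v=-\nabla u$, whereas the coherent version is exactly the one you give).

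One caveat concerns the uniqueness half of your middle step, which goes beyond what the paper actually proves (the paper merely asserts uniqueness for \eqref{eq:momentum2}). Your vorticity argument shows $\nabla\times v\equiv 0$, but on $\mathbb T^3$ a curl-free field is not necessarily exact: the first de Rham cohomology is $\mathbb R^3$, so you only get $v(t,\cdot)=\nabla\tilde u(t,\cdot)+c(t)$ for a spatially constant vector $c(t)$. To close the argument you must also control this harmonic part, e.g.\ by tracking the spatial mean of $v$ through the momentum equation and using $c(T)=0$ from the exact terminal datum $\nabla h$; note $\int_{\mathbb T^3}(v\cdot\nabla)v\,dx$ does not vanish automatically, so this requires a short additional computation (it does vanish modulo gradients once $v=\nabla\tilde u + c$ is substituted, which makes the ODE for $c$ tractable). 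This is a genuine but repairable gap in a step the paper itself leaves unproved; the rest of your outline matches the paper's proof.
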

\subsubsection{Estimations of $\Phi_2$}

 \begin{lemma}
\label{l:hjb}
For any $(p_i, h_i) \in \mathcal D_2$, $v_i = \Phi_2 [p_i, h_i]$, and
$$M_m = \max\{|p_i|_{0, m}, |h_i|_{m+2}: i = 1, 2\},$$ 
the following estimations hold: 
$$|v_1 - v_2|_0 \le \lambda(M_2)  (|h_1 - h_2|_4 + |p_1 - p_2|_{0,1})$$
and
$$|v_1|_{0, 1} \le \lambda(M_2) $$
for some  $\lambda \in \Lambda$.
\end{lemma}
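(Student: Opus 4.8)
The plan is to exploit the Hopf--Cole linearization already introduced in the proof of Lemma \ref{l:verification}. Writing $w_i = e^{-u_i}$, each $w_i$ is the unique strictly positive $C^{1,3}$ solution of the linear backward equation \eqref{eq:pde_w} with coefficient $p_i$ and terminal datum $e^{-h_i}$, and $v_i = \nabla u_i = -\nabla w_i / w_i$. Thus all the estimates on $v_i$ will follow from two-sided bounds on $w_i$ together with sup-norm bounds on $\nabla w_i$ and $\nabla^2 w_i$, and the whole difficulty is reduced to a linear problem rather than the nonlinear HJB \eqref{eq:hjb2}. The first thing I would record is the Feynman--Kac representation
\[
w_i(t,x) = \mathbb E_x\Big[ e^{-h_i(B_T)} \exp\Big(\int_t^T p_i(s, B_s)\,ds\Big)\Big],
\]
where $B$ is a Brownian motion on $\mathbb T^3$ with generator $\frac{1}{2}\Delta$. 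Since $|h_i|_0 \le |h_i|_4 \le M_2$ and $|p_i|_0 \le |p_i|_{0,2} \le M_2$, this yields constants $c_0, C_0$ of the form $\lambda(M_0)$ with $0 < c_0 \le w_i \le C_0$; the strict positivity is the crucial point, because $v_i$ is obtained by dividing by $w_i$.

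To prove the bound $|v_i|_{0,1} \le \lambda(M_2)$ I would then control the derivatives of $w_i$. Differentiating \eqref{eq:pde_w} once in $x_j$ shows that $\partial_j w_i$ solves a linear equation of the same type, with zeroth-order coefficient $p_i$, source $-(\partial_j p_i)w_i$, and terminal datum $-(\partial_j h_i)e^{-h_i}$; the maximum principle (equivalently, Feynman--Kac) gives $|\nabla w_i|_0 \le \lambda(M_1)$, hence $|v_i|_0 = |\nabla w_i/w_i|_0 \le \lambda(M_1)$. For one more derivative I would invoke the global (there is no boundary on $\mathbb T^3$) parabolic Schauder estimate for \eqref{eq:pde_w}, whose constant depends only on the $C^{0,2}$-norm of $p_i$ and the $C^{2+\alpha}$-norm of $e^{-h_i}$, to obtain $|\nabla^2 w_i|_0 \le \lambda(M_2)$. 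Combining $w_i \ge c_0$ with the identity $\nabla^2 u_i = -\nabla^2 w_i/w_i + (\nabla w_i \otimes \nabla w_i)/w_i^2$ then gives $|v_i|_{0,1} \le \lambda(M_2)$. The generous regularity packaged into $M_2$ (two space derivatives of $p$, four of $h$) is precisely what places $e^{-h_i}$ and the source in the Hölder classes these estimates require.

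For the stability estimate I would set $\delta w = w_1 - w_2$ and subtract the two copies of \eqref{eq:pde_w}, so that
\[
\partial_t \delta w + \frac{1}{2} \Delta \delta w + p_1 \delta w = -(p_1 - p_2) w_2, \qquad \delta w(T) = e^{-h_1} - e^{-h_2}.
\]
Since $w_2$ and its derivatives are already bounded by $\lambda(M_2)$, and $|e^{-h_1}-e^{-h_2}|$ in any fixed norm is controlled by $\lambda(M_2)\,|h_1-h_2|$ in the corresponding norm (mean value theorem on the bounded range of the $h_i$), the same Feynman--Kac and Schauder estimates applied to this linear problem give
\[
|\delta w|_0 + |\nabla \delta w|_0 \le \lambda(M_2)\big(|p_1 - p_2|_{0,1} + |h_1 - h_2|_4\big).
\]
Finally I would use the algebraic identity $v_1 - v_2 = \big(-w_2\,\nabla \delta w + \delta w\,\nabla w_2\big)/(w_1 w_2)$, together with $w_i \ge c_0$ and the bound on $\nabla w_2$, to pass from $\delta w$ to $v_1 - v_2$ and conclude the first assertion.

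I expect the main obstacle to be the Hessian bound $|\nabla^2 w_i|_0 \le \lambda(M_2)$ and the companion gradient bound on $\delta w$: one must run the parabolic Schauder estimate globally on the torus while tracking that its constant is an \emph{increasing} function of the single quantity $M_2$, and one must carry the strict lower bound $w_i \ge c_0$ through every division. Once the positivity and the derivative bounds on the linear solutions $w_i$ are secured, the return to $v_i = -\nabla w_i/w_i$ is purely algebraic.
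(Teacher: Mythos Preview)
Your proposal is correct and follows essentially the same route as the paper: Hopf--Cole linearization $w_i=e^{-u_i}$, Feynman--Kac for the two-sided bounds $c_0\le w_i\le C_0$, derivative estimates on $w_i$ and on $\delta w=w_1-w_2$, and then the same algebraic identities to pass to $v_i=-\nabla w_i/w_i$; the paper simply black-boxes the estimates $|w_i|_{0,2}\le\lambda(M_2)$ and $|w_1-w_2|_{0,1}\le\lambda(M_1)(|p_1-p_2|_{0,1}+|h_1-h_2|_3)$ by citing \cite{CHHJS20}. One small caveat: a standard parabolic Schauder estimate would need time-H\"older control of $p_i$, which is available in $\mathcal D_2$ but is \emph{not} encoded in $M_2=|p_i|_{0,2}$, so to obtain a constant of the form $\lambda(M_2)$ you should instead iterate your own differentiation + Feynman--Kac argument once more (the twice-differentiated equation for $\partial_{jk}w_i$ has source controlled by $|p_i|_{0,2}$ and $|w_i|_{0,1}$), which is exactly what the cited estimates in \cite{CHHJS20} do.
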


\begin{proof}
We denote by $w_i = w[p_i, h_i]$ for $i = 1, 2$ 
with the solution map  $w[p, h]$ of \eqref{eq:pde_w}. 
By Lemma \ref{l:verification}, we know $v_i = \nabla u_i \in C^{1,2}$ with relation $w_i = e^{-u_i}$. 
Since \eqref{eq:pde_w} has a unique classical solution and it admits 
Feynman-Kac formula
$$w_i (t, x) = 
\displaystyle
\mathbb E \left[ \exp \left\{\int_t^T p_i(s, x + W_{s-t}) ds - h_i(x + W_{T-t}) \right\} \right], $$
we obtain the estimate
\begin{equation}
\label{eq:est1}
|w_i^{\pm 1}|_0 \le e^{T (|p_i|_0+ |h_i|_0)} \le \lambda(M_0) + \kappa, \ i = 1, 2.
\end{equation}
By Lemma 3.7 and Theorem 3.8 of \cite{CHHJS20}, 
we also have
\begin{equation}
\label{eq:est2}
\begin{array}
{ll}
|w_1 - w_2|_{0,1} & 
\displaystyle
\le \lambda(M_1) \cdot (|p_1 - p_2|_{0,1} + |h_1 - h_2|_3)
\end{array}
\end{equation}
and
\begin{equation}
\label{eq:est21}
|w_i|_{0,2} \le \lambda(M_2) 
\end{equation}
for some $\lambda \in \Lambda$. 

Note that, 
$$|v_1|_0 = \Big | \frac{\nabla w_1}{w_1} \Big|_0 \le \lambda (|w_1|_{0, 1} \wedge |w_1^{-1}|_0)  \le \lambda(M_1) .$$

Also from $\partial_{ij} u_1 = \frac{- \partial_j w_1 \partial_i u_1 - \partial_{ij} w_1}{w_1}$, we have the second estimate 
$$|v_1|_{0, 1} \le |w_1^{-1}|_0 (|w_1|_0 |v_1|_0 + |w_1|_{0,2})\le \lambda(M_2).$$

On the other hand, one can compute that 
$$
\begin{array}
{ll}
|v_1 - v_2|_{0} & \displaystyle= \Big | \frac{\nabla w_1}{w_1} - \frac{\nabla w_2}{w_2} \Big |_{0} \\
& \displaystyle
\le |w_1^{-1} w_2^{-1}|_{0} \cdot (|w_2|_{0} |\nabla w_1 - \nabla w_2|_{0} + 
|\nabla w_2|_{0} |w_1 - w_2|_{0})
\\ & \displaystyle
\le |w_1^{-1} w_2^{-1}|_{0} \cdot |w_2|_{0,1} \cdot 
 |w_1 - w_2|_{0, 1}).
\end{array}
$$
Therefore, 
the above estimates \eqref{eq:est1}, \eqref{eq:est2}, and \eqref{eq:est21} yield the first estimate.
\end{proof}

\subsection{The proof of the main result}
This subsection provides the proof of Theorem \ref{t:main}.
Recall that $\mathcal D_i$ and operators $\Phi_i$ are given in the subsection \ref{s:mfg}.

With assumptions \eqref{eq:a3}-\eqref{eq:a2} given 
in Theorem \ref{t:main}, $\Phi_1: \mathcal D_1 \mapsto \mathcal D_2$ is continuous. Moreover,
$\Phi_2: \mathcal D_2 \mapsto \mathcal D_3$ and $\Phi_3^\mu: \mathcal D_3 \mapsto \mathcal D_1$ are also continuous in view of
Lemma \ref{l:hjb} and Lemma \ref{l:fpk}, respectively. As a result, $\Phi^\mu: \mathcal D_1 \mapsto \mathcal D_1$ is continuous as a composition of three continuous mappings.

The assumption \eqref{eq:a1} together with Lemma \ref{l:hjb} and Lemma \ref{l:fpk}, 
we have uniform boundedness of $\Phi$ in the sense
$$|\Phi^\mu[\rho] |_{1T, 1/2} \le \lambda(\kappa),$$
where $\lambda\in \Lambda$ and $\kappa>0$. Therefore, if we consider a subset of $\mathcal D_1$ given by
$$\Gamma = \{\rho\in \mathcal D_1: |\rho|_{1T, 1/2} \le \lambda(\kappa)\},$$
then 
 $ \Phi^\mu$ is a mapping from $\Gamma$ into itself.
Note that,  $\Gamma$ is closed and bounded in $|\cdot|_{1T, 1/2}$, thus compact in $\mathcal D_1$.
Furthermore, $\Gamma$ is convex. 
The existence of MFG equilibrium is implied by Schauder's fixed point theorem.
Finally, Lemma \ref{l:verification} and Lemma \ref{l:fpk} implies that $(\rho, v)$ is solves the  \eqref{eq:fpk2} - \eqref{eq:momentum1} in the classical sense. Therefore, there exists a classical 
solution 
for
the system \eqref{eq:nse4}.

\section{Further remarks} \label{s:remark}
In this paper, we showed the existence of the solution $(\rho, v)$ to the NSE  \eqref{eq:fpk2} - \eqref{eq:momentum1}, and further, it gives the solvability for Nash equilibrium and optimal control for the corresponding MFG. 
\subsection{Uniqueness}
There is no result on the uniqueness unless additional monotonicity is imposed. However, without further effort, it can be shown that $(\rho, v)$ is the solution to the NSE  \eqref{eq:fpk2} - \eqref{eq:momentum1} if and only if it is a pair of equilibrium and optimal control in the sense of 
Definition \ref{d:mfg}.
This is the major difference between the verification of MFG and control: the verification theorem of control yields the uniqueness of HJB, but the verification theorem of MFG yields only one-to-one correspondence between equilibrium and HJB-FPK.

\subsection{Periodic MFG on $ \mathbb R^3$}
Another remark is on the connection between MFG on the torus $\mathbb T^3$ and on Euclidean space $ \mathbb R^3$. 
It is usual practice that one can obtain periodic solution of NSE  \eqref{eq:fpk2} - \eqref{eq:momentum1} on $ \mathbb R^3$ if the system parameters are periodically extended correspondingly:
$$\tilde \mu(x) = \mu(\pi(x)), \ \ \forall x\in \mathbb R^3$$
and
\begin{equation}
\label{eq:p3}
\tilde p[\rho](t, x)  = \int_{\mathbb T^3} \bar p( \pi(x), \pi(y)) \rho(t, y) dy + \hat p(\pi(x)),\ \forall x, y\in \mathbb R^3
\end{equation}
\begin{equation}
\label{eq:h3} 
\tilde h[\rho](x) = \int_{\mathbb T^3} \bar h(\pi(x), \pi(y)) \rho_T(y) dy + \hat h(\pi(x)), \ \forall x, y\in \mathbb R^3.
\end{equation}
However, such an extension does not make sense for MFG on $ \mathbb R^3$, since $\tilde \mu$ is not a probability density anymore. 

Indeed, the MFG on torus is equivalent to MFG on Euclidean space with periodic cost functions. More precisely, consider periodic MFG on $ \mathbb R^3$ with
\begin{itemize}
\item
periodic cost functions $\tilde p$ of \eqref{eq:p3} and $\tilde h$ of \eqref{eq:h3}, and some (not periodic) initial density $\tilde \mu \in \mathcal P_1( \mathbb R^3)$,
\end{itemize}
the corresponding formulation of MFG on $\mathbb T^3$ is with
\begin{itemize}
\item
cost functions $p$ of \eqref{eq:p2} and $h$ of \eqref{eq:h2}, and the pushforward initial density $\mu = \pi_* \tilde \mu \in \mathcal P_1(\mathbb T^3)$.
\end{itemize}
With the solution $(\rho, v)$ of \eqref{eq:fpk2} - \eqref{eq:momentum1}, one can obtain the optimal control of the original MFG on $ \mathbb R^3$ by a periodic extension
$$\tilde v(t, x) = v(t, \pi(x)), \forall x\in \mathbb R^3,$$
and the equilibrium by solving
\begin{equation*}
\label{eq:fpk4}
\begin{array}
{ll}
\partial_t \rho - \nabla \cdot (\rho \tilde v) = \frac 1 2 \Delta \rho, &\hbox{ on } (0, T)\times \mathbb R^3 \\
\rho(0, x) =  \tilde \mu(x), & \hbox{ on } \mathbb R^3.
\end{array}
\end{equation*}
In summary, if MFG is given with periodic cost functions, then the optimal control is periodic.

\subsection{Hamiltonian system}
This approach follows the stochastic version of the Pontryayin maximum principle, see \cite{MY99b} and \cite{YZ99} for more details.
Recall that, if a smooth $u$ solves HJB given by
$$\partial_t u + \frac 1 2 u = H(t, x, - \nabla u), \ u(T, x) = h(x),$$
then, its gradient flow $Y(t) = - \nabla u(t, X(t))$ together with the optimal trajectory $X(t)$ forms a stochastic Hamiltonian system:
$$
\begin{array}
{ll}
d X_t = D_y H(t, X_t, Y_t) dt + d W_t, \ X_0 = x \\
d Y_t = - D_x H(t, X_t, Y_t) dt + Z_t d W_t, \ Y_T = - \nabla h(X_T).
\end{array}$$
From HJB \eqref{eq:hjb1}, the
Hamiltonian for the generic player's problem can be written as the sum of the kinetic energy and the potential energy:
$$H[\rho](t, x, y) = \frac 1 2 |y|^2 + p[\rho](t, x)$$
and it leads to the following McKean-Vlasov FBSDE system, so called Hamiltonian system of unknown $(X, Y, Z, \rho)$:
\begin{equation}
\label{eq:ham1}
\left\{
\begin{array}
{ll}
d X_t = Y_t dt + dW_t, \ X(0) = \xi, \\
d Y_t = - \nabla p[\mathcal L(X_t)](t, X_t) dt + Z(t) dW(t), \ Y(T) = - \nabla h[\rho](X_T).
\end{array}
\right.
\end{equation}

\begin{proposition}
\label{l:ham1}
Hamiltonian system \eqref{eq:ham1} admits unique solution $(X, Y, Z)$.
Moreover,  there exists decoupling term $v\in C^{1,2}([0, T]\times \mathbb T^d, \mathbb T^d)$ satisfying 
$$Y_t = - v(t, X_t).$$
\end{proposition}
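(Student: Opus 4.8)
The plan is to read the Hamiltonian system \eqref{eq:ham1} as the probabilistic (McKean--Vlasov FBSDE) counterpart of the coupled HJB--FPK system \eqref{eq:fpk2}--\eqref{eq:hjb1}, and to manufacture its solution directly from the PDE solution already produced by Theorem \ref{t:main}. The bridge is the decoupling field $v=\nabla u$, where $u$ is the value function solving the HJB equation: Pontryagin's principle predicts $Y_t=-v(t,X_t)$, so the entire content is to verify that this ansatz turns the forward--backward system into a genuine solution and that $v$ inherits the required regularity.

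First I would invoke Theorem \ref{t:main} to obtain an equilibrium flow $\rho$ together with the associated HJB solution $u\in C^{1,3}$ from Lemma \ref{l:verification}, and set $v=\nabla u\in C^{1,2}$. Since $v$ is Lipschitz on $\mathbb T^3$, Lemma \ref{l:fpk1} supplies a unique strong solution $X$ of the forward equation $dX_t=-v(t,X_t)\,dt+dW_t$ with $X_0=\xi$ and $\mathcal L(\xi)=\mu$. The fixed-point property $\rho=\Phi_3^\mu[v]$ of Definition \ref{d:mfg} guarantees $\mathcal L(X_t)=\rho(t)$ for every $t$, so the McKean--Vlasov coupling $\nabla p[\mathcal L(X_t)]=\nabla p[\rho]$ closes automatically, and writing $Y_t=-v(t,X_t)$ recovers exactly the forward dynamics $dX_t=Y_t\,dt+dW_t$.

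Next I would apply It\^o's formula (retaining the martingale term, as in the proof of Lemma \ref{l:fpk1}) componentwise to $v=\nabla u$. Each component produces the second-order combination $\partial_t v-v\cdot\nabla v+\tfrac12\Delta v$, which by the momentum equation \eqref{eq:momentum2} equals $\nabla p[\rho]$. Hence
\begin{equation*}
dv(t,X_t)=\nabla p[\rho](t,X_t)\,dt+Dv(t,X_t)\,dW_t,
\end{equation*}
and setting $Y_t=-v(t,X_t)$, $Z_t=-Dv(t,X_t)=-D^2u(t,X_t)$ gives
\begin{equation*}
dY_t=-\nabla p[\rho](t,X_t)\,dt+Z_t\,dW_t,\qquad Y_T=-\nabla u(T,X_T)=-\nabla h[\rho](X_T),
\end{equation*}
which is precisely the backward equation of \eqref{eq:ham1} with its prescribed terminal value. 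This simultaneously establishes existence of $(X,Y,Z)$ and exhibits the decoupling field $v\in C^{1,2}$ with $Y_t=-v(t,X_t)$, settling the second assertion.

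The main obstacle is uniqueness. Once the equilibrium $\rho$ (equivalently, the decoupling field $v$) is fixed, uniqueness is routine: the forward SDE has a unique strong solution by Lipschitz continuity of $v$, and then $(Y,Z)$ are pinned down by $Y_t=-v(t,X_t)$ together with the martingale representation above. The genuine difficulty is closing the self-consistency loop $\rho=\mathcal L(X_\cdot)$, since distinct equilibria would furnish distinct decoupling fields and hence distinct solutions of \eqref{eq:ham1}; thus uniqueness of the Hamiltonian system is equivalent to uniqueness of the MFG equilibrium. As remarked above, this does not hold in general and would require an additional Lasry--Lions monotonicity condition on $(p,h)$. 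Under such a condition the argument would compare two candidate solutions, use monotonicity to force $\rho_1=\rho_2$, and thereby reduce to the already-settled decoupled case.
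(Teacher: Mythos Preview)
Your approach is precisely the one the paper intends: its proof is the single sentence ``The proof is the direct consequence of the existence of MFG,'' and you have unpacked exactly what that means---take the equilibrium $\rho$ and the HJB solution $u$ from Theorem~\ref{t:main} and Lemma~\ref{l:verification}, set $v=\nabla u\in C^{1,2}$, solve the forward SDE for $X$, identify $\mathcal L(X_t)=\rho(t)$ via the fixed-point property, and then read off $(Y,Z)=(-v(t,X_t),-Dv(t,X_t))$ by It\^o's formula together with the momentum equation \eqref{eq:momentum2}. For existence and the decoupling-field representation you are doing what the paper does, only with the details written out.

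Your reservations about uniqueness are justified and in fact sharper than the paper's own proof. Since $\rho$ is listed among the unknowns of \eqref{eq:ham1}, distinct MFG equilibria would produce distinct solutions of the Hamiltonian system, and the paper itself concedes in Section~\ref{s:remark} that ``there is no result on the uniqueness unless additional monotonicity is imposed.'' The one-line argument in the paper does not address this point, so the uniqueness claim in the proposition should be understood either conditionally on a Lasry--Lions monotonicity hypothesis, or merely as uniqueness of $(X,Y,Z)$ once the equilibrium $\rho$ (equivalently the decoupling field $v$) has been fixed---exactly the dichotomy you describe.
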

The proof is the direct consequence of the existence of MFG.

\vspace{.2in}
\noindent {\bf Acknowledgments.} 
Luo’s research was supported by a grant from the Research Grants Council of the Hong Kong Special Administrative Region, China (Project No. 11305818).

\bibliographystyle{plain}

\def\cprime{$'$}

\end{document}